\newtheorem{theorem}{Theorem}
\newtheorem{lemma}[theorem]{Lemma}
\newtheorem{defn}[theorem]{Definition}
\newcommand{\la}{\leftarrow}
\newcommand{\lra}{\longrightarrow}
\newcommand{\llra}[1]{\stackrel{#1}{\longrightarrow}}
\newcommand{\Ideal}[2]{
\xymatrix{
a \ar@(dl,ul)^{\textstyle{#1}} \ar@(ur,ul)[rr]^{\textstyle{#1}} & & 
b \ar@(dr,ur)_{\textstyle{#1}} \ar@(dl,dr)[ll]^{\textstyle{#2}} 
}}
\newcommand{\invlim}{\operatornamewithlimits{\underset{\longleftarrow}{lim}}}
\newcommand{\sm}{\wedge}        
\newcommand{\hequiv}{\;\simeq\;}       
\newcommand{\Mdef}[2]{\newcommand{#1}{\relax \ifmmode #2 \else $#2$\fi}}
\Mdef{\cA}{\mathcal{A}}
\Mdef{\bI}{{\bf I}}
\Mdef{\cI}{\mathcal{I}}
\Mdef{\cS}{\mathcal{S}}
\Mdef{\cW}{\mathcal{W}}
\Mdef{\bR}{\bf{R}}
\Mdef{\RR}{\mathbf{R}}
\Mdef{\Z}{\mathbf{Z}}
\Mdef{\F}{\mathbf{F}}
\begin{document}

\title{Jeff Smith's Theory of Ideals}
\author{Robert R.  Bruner}
\email{robert.bruner@wayne.edu}
\author{Daniel C. Isaksen}
\email{isaksen@math.wayne.edu}
\thanks{}

\date{Jul 16  2006}

\begin{abstract}
Jeff Smith recently proposed a theory of ideals for
rings in a triangulated symmetric monoidal category such
as ring spectra or DGAs.  We show that his definition
is equivalent to a `central'  $R$-$R$-bimodule map $I \to R$.
\end{abstract}

\maketitle

\section{Intro}

In a talk at the Mittag-Leffler Instutute on 6 June 2006,
Jeff Smith~\cite{Smith} proposed a theory of ideals for
rings in a triangulated symmetric monoidal category such
as ring spectra or DGAs.  
We show that his definition
is equivalent to a `central'  $R$-$R$-bimodule map $I \to R$.
(Condition (5) in Lemma 2.)
We are unlikely to return to these matters, so are posting this
just to record this equivalence.

We state, but do not examine, the idea that ideals should be in
one-to-one correspondence with ring quotients $R \to Q$.
Jeff Smith's intended application is a more structured description
of Nil terms in K-theory.
We end with a number of questions.

More recently, Mark Hovey~\cite{Hovey} has some interesting results
on these notions in
his ``Smith ideals of structured ring spectra''.
He has clarified a number of the issues we raise here.

We will speak of spectra, but use only arguments which work in any
internally tensored and cotensored
triangulated symmetric monoidal category $\cS$ satisfying (---Hypotheses---).
Let $S$ be the unit of the symmetric monoidal product.

\section{Ideals}

\begin{defn}
\label{def_ideal}
An {\em Ideal} of a ring spectrum $R$ is a category $\bI$
enriched over $\cS$ with 
\begin{itemize}
\item[-] two objects, $a$ and $b$,
\item[-] morphisms $\bI(a,a) = \bI(a,b) = \bI(b,b) = R$ and $\bI(b,a) = I$,
\end{itemize}
in which the compositions $R \sm R \lra R$ are the
product of $R$.  
\end{defn}

We capitalize the word `Ideal' to distinguish it from the notion
of an ideal of a discrete ring.
Following Jeff Smith, we find it convenient to represent this in a diagram:
\[
\Ideal{R}{I}
\]

Unpacking the definition, we find that we have 8 compositions, of
which 4 are simply the product map $\mu : R \sm R \lra R$, while the 
other 4 are structure maps:
\[
\begin{tabular}{ p{20ex} lr}
$(a \la a \la b) $ & $  \mu_L : R \sm I \lra I $ \\
$(a \la b \la b) $ & $  \mu_R : I \sm R \lra I $ \\[1ex]
$(a \la b \la a) $ & $  \nu_R : I \sm R \lra R $ \\
$(b \la a \la b) $ & $  \nu_L : R \sm I \lra R $
\end{tabular}
\]

The condition that the objects $a$ and $b$  have identity maps requires that
the diagrams~(\ref{unital}) commute.  The maps labelled $\hequiv$  are the 
unit isomorphisms
of the symmetric monoidal structure, and the 
`inclusions'
$j_L, j_R : I \lra R$
are defined by the diagrams.
\begin{equation}
{\label {unital}}
\xymatrix{
   I \ar[r]^(.3){\hequiv}  \ar@{=}[d]
 & S \sm I \ar[d]^{i \sm 1} 
 & I \ar[l]_(.3){\hequiv} 
     \ar[d]^{j_L} 
 &
 & I \ar[r]^(.3){\hequiv}  \ar@{=}[d]
 & I \sm S \ar[ld]_{\hequiv} \ar[d]^{1 \sm i} 
 & I \ar[l]_<{\hequiv} \ar[d]^{j_R} 
 \\
   I
 & R \sm I \ar[l]^{\mu_L} \ar[r]_{\nu_L} 
 & R 
 &
 & I
 & I \sm R \ar[l]^{\mu_R} \ar[r]_{\nu_R} 
 & R
}
\end{equation}



There are 16 associativity conditions which must be met by the maps $\mu$,
$\mu_L$, $\mu_R$, $\nu_L$, and $\nu_R$ if they are to define an Ideal.
We will label them by the objects ($a$ or $b$) they pass through, as
follows:

{
\begin{itemize}
\addtolength{\itemindent}{20ex}
\item[$aaaa : a \la a \la a \la a$]  
 \qquad  {$\xymatrix{
R \sm R \sm R  \ar[r]^{1 \sm \mu} \ar[d]_{\mu \sm 1} &
R \sm R \ar[d]^{\mu} \\
R \sm R \ar[r]_{\mu} & R \\
} $} \hfill  
\item[$aaab : a \la a \la a \la b$]  
 \qquad  {$\xymatrix{
R \sm R \sm I  \ar[r]^{1 \sm \mu_L} \ar[d]_{\mu \sm 1} &
R \sm I \ar[d]^{\mu_L} \\
R \sm I \ar[r]_{\mu_L} & I \\
} $} \hfill  
\item[$aaba : a \la a \la b \la a$]  
 \qquad  {$\xymatrix{
R \sm I \sm R  \ar[r]^{1 \sm \nu_R} \ar[d]_{\mu_L \sm 1} &
R \sm R \ar[d]^{\mu} \\
I \sm R \ar[r]_{\nu_R} & R \\
} $} \hfill  
\item[$aabb : a \la a \la b \la b$]  
 \qquad  {$\xymatrix{
R \sm I \sm R  \ar[r]^{1 \sm \mu_R} \ar[d]_{\mu_L \sm 1} &
R \sm I \ar[d]^{\mu_L} \\
I \sm R \ar[r]_{\mu_R} & I \\
} $} \hfill  
\item[$abaa : a \la b \la a \la a$]  
 \qquad  {$\xymatrix{
I \sm R \sm R  \ar[r]^{1 \sm \mu} \ar[d]_{\nu_R \sm 1} &
I \sm R \ar[d]^{\nu_R} \\
R \sm R \ar[r]_{\mu} & R \\
} $} \hfill  
\item[$abab : a \la b \la a \la b$]  
 \qquad  {$\xymatrix{
I \sm R \sm I  \ar[r]^{1 \sm \nu_L} \ar[d]_{\nu_R \sm 1} &
I \sm R \ar[d]^{\mu_R} \\
R \sm I \ar[r]_{\mu_L} & I \\
} $} \hfill  
\item[$abba : a \la b \la b \la a$]  
 \qquad  {$\xymatrix{
I \sm R \sm R  \ar[r]^{1 \sm \mu} \ar[d]_{\mu_R \sm 1} &
I \sm R \ar[d]^{\nu_R} \\
I \sm R \ar[r]_{\nu_R} & R \\
} $} \hfill  
\item[$abbb : a \la b \la b \la b$]  
 \qquad  {$\xymatrix{
I \sm R \sm R  \ar[r]^{1 \sm \mu} \ar[d]_{\mu_R \sm 1} &
I \sm R \ar[d]^{\mu_R} \\
I \sm R \ar[r]_{\mu_R} & I \\
} $} \hfill  
\item[$baaa : b \la a \la a \la a$]  
 \qquad  {$\xymatrix{
R \sm R \sm R  \ar[r]^{1 \sm \mu} \ar[d]_{\mu \sm 1} &
R \sm R \ar[d]^{\mu} \\
R \sm R \ar[r]_{\mu} & R \\
} $} \hfill  
\item[$baab : b \la a \la a \la b$]  
 \qquad  {$\xymatrix{
R \sm R \sm I  \ar[r]^{1 \sm \mu_L} \ar[d]_{\mu \sm 1} &
R \sm I \ar[d]^{\nu_L} \\
R \sm I \ar[r]_{\nu_L} & R \\
} $} \hfill  
\item[$baba : b \la a \la b \la a$]  
 \qquad  {$\xymatrix{
R \sm I \sm R  \ar[r]^{1 \sm \nu_R} \ar[d]_{\nu_L \sm 1} &
R \sm R \ar[d]^{\mu} \\
R \sm R \ar[r]_{\mu} & R \\
} $} \hfill  
\item[$babb : b \la a \la b \la b$]  
 \qquad  {$\xymatrix{
R \sm I \sm R  \ar[r]^{1 \sm \mu_R} \ar[d]_{\nu_L \sm 1} &
R \sm I \ar[d]^{\nu_L} \\
R \sm R \ar[r]_{\mu} & R \\
} $} \hfill  
\item[$bbaa : b \la b \la a \la a$]  
 \qquad  {$\xymatrix{
R \sm R \sm R  \ar[r]^{1 \sm \mu} \ar[d]_{\mu \sm 1} &
R \sm R \ar[d]^{\mu} \\
R \sm R \ar[r]_{\mu} & R \\
} $} \hfill  
\item[$bbab : b \la b \la a \la b$]  
 \qquad  {$\xymatrix{
R \sm R \sm I  \ar[r]^{1 \sm \nu_L} \ar[d]_{\mu \sm 1} &
R \sm R \ar[d]^{\mu} \\
R \sm I \ar[r]_{\nu_L} & R \\
} $} \hfill  
\item[$bbba : b \la b \la b \la a$]  
 \qquad  {$\xymatrix{
R \sm R \sm R  \ar[r]^{1 \sm \mu} \ar[d]_{\mu \sm 1} &
R \sm R \ar[d]^{\mu} \\
R \sm R \ar[r]_{\mu} & R \\
} $} \hfill  
\item[$bbbb : b \la b \la b \la b$]  
 \qquad  {$\xymatrix{
R \sm R \sm R  \ar[r]^{1 \sm \mu} \ar[d]_{\mu \sm 1} &
R \sm R \ar[d]^{\mu} \\
R \sm R \ar[r]_{\mu} & R \\
} $} \hfill  
\end{itemize}
}

\begin{lemma}
In any Ideal, the following hold.
\begin{enumerate}
\item $\mu_R$ and $\mu_L$ make $I$ a unital  $R$-$R$-bimodule.
\item $\nu_L$ and $\nu_R$ are $R$-$R$-bimodule homomorphisms.
\item $j_L = j_R$.  We write $j$ for this common value.
\item $j$ is an R-$R$-bimodule map.
\item $\mu_R(1 \sm j) = \mu_L ( j \sm 1)$ (Figure~\ref{jderfig}.) \label{jder}
\item $\nu_L = j \mu_L = \mu (j \sm 1)$ and $\nu_R = j \mu_R = \mu (1 \sm j)$.
\end{enumerate}
Conversely, given an R-$R$-bimodule $I$ and an $R$-$R$-bimodule map $j : I \lra R$
satisfying condition (\ref{jder}), the bimodule structure maps $\mu_L$ and $\mu_R$,
together with the composites
$\nu_L = j \mu_L$ and $\nu_R = j \mu_R$ make an Ideal with morphisms $R$ and $I$ and
compositions $\mu$, $\mu_L$, $\mu_R$, $\nu_L$ and $\nu_R$.
\label{reduction}
\end{lemma}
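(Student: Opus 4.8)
The plan is to prove both implications by moving freely between the sixteen associativity squares and the usual bimodule axioms, with one recurring device: precompose a mixed associativity square with the unit $i\colon S\to R$ in one of its three tensor slots, and then collapse the resulting copy of $S$ using the coherence isomorphisms so as to read off one of the four identities $\mu_L(i\wedge 1)\simeq 1_I$, $\mu_R(1\wedge i)\simeq 1_I$, $\nu_L(i\wedge 1)\simeq j_L$, $\nu_R(1\wedge i)\simeq j_R$ recorded in~(\ref{unital}).

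For the forward direction, (1) is immediate: left and right associativity of the two $R$-actions on $I$ are the squares $aaab$ and $abbb$, their compatibility is $aabb$, and unitality is the commutativity of the left-hand square in each half of~(\ref{unital}); in particular $\mu_L$ and $\mu_R$ are themselves $R$-$R$-bimodule morphisms, again by $aaab$, $aabb$, $abbb$. Feeding the unit into $baba$ in its two outer slots gives $j_L=j_R$, which is (3); feeding it into $aaba$ in the last slot and into $babb$ in the first slot gives the left- and right-linearity of $j$, which is (4); feeding it into $abba$ and into $baab$ gives $\nu_R=j\mu_R$ and $\nu_L=j\mu_L$, which together with (4) is (6); and feeding it into $abab$ in the middle slot gives $\mu_L(j\wedge 1)=\mu_R(1\wedge j)$, which is (5). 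Statement (2) is then formal: $\nu_L=j\mu_L$ and $\nu_R=j\mu_R$ are composites of the bimodule morphisms $\mu_L$ and $\mu_R$ with the bimodule morphism $j$.

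For the converse we are handed a unital $R$-$R$-bimodule $I$ and a bimodule map $j$ satisfying~(\ref{jder}); we put $\nu_L=j\mu_L$ and $\nu_R=j\mu_R$, and must verify the two unital diagrams and all sixteen associativity squares. The unital diagrams hold because the actions are unital and because $\nu_L(i\wedge 1)\simeq j\simeq\nu_R(1\wedge i)$, which also gives $j_L=j_R=j$. Of the sixteen squares, the five all-$R$ ones ($aaaa$, $baaa$, $bbaa$, $bbba$, $bbbb$) are just associativity of $\mu$; $aaab$, $aabb$, $abbb$ are the module and compatibility axioms of $I$; $abba$ and $baab$ collapse to module associativity once $\nu$ is rewritten as $j\mu$; and $aaba$, $abaa$, $babb$, $baba$, $bbab$ each collapse, after the same rewriting, to a short chain built from one or two of the module axioms together with the left- and right-linearity of $j$.

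The only genuine obstacle is the remaining square $abab$, namely $\mu_L(\nu_R\wedge 1)=\mu_R(1\wedge\nu_L)$, and it is exactly here that hypothesis~(\ref{jder}) enters. One computes
\begin{align*}
\mu_L(\nu_R\wedge 1) &= \mu_L(j\wedge 1)(\mu_R\wedge 1)
  \overset{(\ref{jder})}{=} \mu_R(1\wedge j)(\mu_R\wedge 1) \\
 &= \mu_R(\mu_R\wedge 1)(1\wedge 1\wedge j)
  = \mu_R(1\wedge\mu)(1\wedge 1\wedge j) \\
 &= \mu_R(1\wedge j\mu_L) = \mu_R(1\wedge\nu_L),
\end{align*}
using in turn the definition of $\nu_R$, condition~(\ref{jder}), bifunctoriality of $\wedge$, right-associativity of the $R$-action on $I$, and the left-linearity of $j$. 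Thus $abab$ is the one associativity condition not already forced by $I$ being a bimodule and $j$ a bimodule map, which is why~(\ref{jder}) is the sole extra hypothesis needed; throughout, the bookkeeping with coherence isomorphisms is routine and I would suppress it.
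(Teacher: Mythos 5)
Your forward direction is correct and uses the same basic device as the paper — precomposing each mixed associativity square with the unit $i\colon S\to R$ in one slot and reading off the consequence — though you organize the logical order slightly differently: the paper proves (2) directly from the four associativities $aaba$, $abaa$, $bbab$, $babb$ and then deduces (4) from the fact that $\nu_L$ is a bimodule map together with the observation that $i\wedge 1\colon S\wedge I\to R\wedge I$ is right $R$-linear, whereas you extract (4) directly by unit insertion into $aaba$ and $babb$ and then obtain (2) formally as the composite of bimodule maps. Both orderings work. The more substantial difference is the converse: the paper's proof literally ends with ``We must verify that these are unital and satisfy the 16 associative laws'' and supplies no verification, while you carry it out in full, isolating $abab$ as the one square that genuinely needs condition~(\ref{jder}) and checking that the remaining fifteen collapse to the bimodule axioms, associativity of $\mu$, and the bimodule-map property of $j$ once $\nu_L$, $\nu_R$ are rewritten as $j\mu_L$, $j\mu_R$. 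I checked your chain of equalities for $abab$ and it is correct (definition of $\nu_R$, hypothesis (\ref{jder}), bifunctoriality of $\wedge$, the square $abbb$, left $R$-linearity of $j$, definition of $\nu_L$), and I spot-checked $aaba$, $abaa$, $baba$, $bbab$, $babb$, $abba$, $baab$ under the same rewriting; all reduce as you claim. So your argument is not merely correct — it completes a step the paper asserts without proof.

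One small notational caveat for you to be aware of rather than a defect in your argument: the statement of item (6) in the Lemma as printed, $\nu_L = j\mu_L = \mu(j\wedge 1)$ and $\nu_R = j\mu_R = \mu(1\wedge j)$, has $j\wedge 1$ and $1\wedge j$ swapped relative to what actually type-checks ($\nu_L\colon R\wedge I\to R$ must be $\mu(1\wedge j)$, and $\nu_R\colon I\wedge R\to R$ must be $\mu(j\wedge 1)$); the paper's own diagrams and your proof both use the correct versions, so this is a typo in the Lemma statement, not an error on your part.
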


\begin{proof}
We start by supposing that  {\bf I} is an ideal  of $R$, and
deriving conditions (1) - (6).
Associativities $aaab$ and $abbb$ say that $\mu_L$ and $\mu_R$ are associative, and associativity
$aabb$ says that they commute, making $I$ an $R$-$R$-bimodule.  The identity maps of $a$ and $b$
make it unital on both sides, proving the first item

Associativities $aaba$, $abaa$, $bbab$, and $babb$ show that $\nu_L$ and $\nu_R$ are
$R$-$R$-bimodule homomorphisms.

To show $j_L = j_R$, we write $j_L$ as $\mu(\nu_L \sm 1)(i \sm 1 \sm i)$ preceded by
the equivalence $I \llra{\hequiv} S \sm I \sm S$, as in the following diagram.
\[\xymatrix{
S \sm I \sm S \ar[rd]^{1 \sm 1 \sm i}
              \ar[rdd]_{i \sm 1 \sm i} &
& &
I \sm S \ar[dd]_{j_L \sm 1}
        \ar[dl]_{1 \sm i}
	\ar[lll]_{\hequiv} &
I \ar[l]_(.4){\hequiv}
  \ar[dd]_{j_L}
\\
&
S \sm I \sm R \ar[d]^{i \sm 1 \sm 1} &
I \sm R \ar[l]_(.4){\hequiv}
        \ar[d]_{j_L \sm 1} &
& \\
&
R \sm I \sm R  \ar[r]_{\nu_L \sm 1} 
               \ar[d]_{1 \sm \nu_R } &
R \sm R \ar[d]^{\mu} &
R \sm S \ar[l]_{1 \sm i} &
R \ar[l]_(.4){\hequiv}
  \ar@{=}[lld]
\\
&
R \sm R \ar[r]_{\mu} & 
R & & \\
} \]
Symmetrically, $j_R = \mu(1 \sm \nu_R)(i \sm 1 \sm i)$, and the associativity
$baba$, $\mu(1 \sm \nu_R) = \mu(\nu_L \sm 1)$,
then shows these are equal.  We will now write $j = j_L = j_R.$

To show $j$ is a bimodule homomorphism, it will suffice, by symmetry, to show $j_L$ is
a right $R$-module homomorphism.  This follows since $\nu_L$ is an $R$-$R$-bimodule
homomorphism and
$I \llra{\hequiv} S \sm I \llra{i \sm 1} R \sm I$ is evidently a right
$R$-module homomorphism.

In terms of elements,
the equation $\mu_R(1 \sm j) = \mu_L(j \sm 1)$ says that $j(a)  b = a j(b)$.
Commutativity $abab$ composed with the unit $ S \lra R$
(Figure~\ref{jderfig}) shows that this is true.
\begin{figure}[hb]
\[\xymatrix{
I \sm I \ar@/_1pc/[rrddd]_{j \sm 1}
        \ar[rd]^{\hequiv}
	\ar@/^1pc/[rrrdd]^{1 \sm j} 
& & & \\
 & I \sm S \sm I \ar[rd]^{1 \sm i \sm 1}
 & & \\
 & & I \sm R  \sm I \ar[r]_{1 \sm \nu_L}
                    \ar[d]^{\nu_R \sm 1}
 & I \sm R \ar[d]^{\mu_R} \\
 & & R \sm I \ar[r]_{\mu_L}
 & I \\
}\]
{\caption{\label{jderfig}
The `central', or `derivation' property of $j$ : $j(a)b = aj(b).$}}
\end{figure}

Finally, the diagram below shows that
$\nu_R = j \mu_R$.  Its bottom square is the associativity $baab$ and the rest
is evidently commutative.  The relation $\nu_L = j \mu_L$ is analogous.
\[
\xymatrix{
I \sm R \ar[d]_{\hequiv} \ar[r]^{\mu_R} \ar@/_3pc/[ddd]_{1}
& I \ar[d]_{\hequiv} \ar@/^3pc/[ddd]^{j}
\\
I \sm R \sm S \ar[d]_{1 \sm 1 \sm i}  \ar[r]^{\mu_R \sm 1}
& I \sm S \ar[d]^{1 \sm i}
\\
I \sm R \sm R \ar[d]_{1 \sm \mu} \ar[r]^{\mu_R \sm 1}
& I \sm R \ar[d]_{\nu_R}
\\
I \sm R \ar[r]^{\nu_R}
& R
}
\]
To get $\nu_R = \mu(j \sm 1)$, use the fact that $\nu_R$ is a right 
$R$-module map ($abaa$) and the definition of $j$ : 
\[\xymatrix{
I \sm R \ar[r]^{\hequiv}
        \ar[d]_{j \sm 1}
& I \sm S \sm R \ar[d]^{1 \sm i \sm 1}
\\
R \sm R \ar[d]_{\mu}
& I \sm R \sm R \ar[l]_{\nu_R \sm 1}
                \ar[d]^{1 \sm \mu}
\\
R 
& I \sm R \ar[l]_{\nu_R}
}\]

Now, conversely, suppose given an $R$-$R$-bimodule $I$ (with structure maps $\mu_R$ and
$\mu_L$) and an $R$-$R$-bimodule homomorphism $j : I \lra R$ satisfying
$\mu_R(1 \sm j) = \mu_L(j \sm 1)$.  Define $\nu_R = j \mu_R$
and $\nu_L = j \mu_L$.  We must verify that these are unital and satisfy the 16
associative laws.  

\end{proof}

\section{The main Theorem}

In light of Lemma~\ref{reduction}, we can define an Ideal by specifying
a bimodule map $j : I \lra R$ satisfying the additional property that
$\mu_R(1 \sm j) = \mu_L(j \sm 1)$ ($j(a)b = aj(b)$; see Figure~\ref{jderfig}).

The main theorem is that Ideals of $R$  are in 1-1 correspondence with ring maps
out of $R$.   Jeff Smith outlined an argument for this in his talk, but we do
not reproduce it here.  

\begin{theorem}
The category of Ideals of $R$ is equivalent to the category of rings under
$R$.    The equivalence associates to an Ideal $j$ the cofiber of $j$,
$R \lra R/I = Cof(f)$, and associates to a ring map $f : R \lra Q$ its fiber,
$I = Fib(f) \lra R$.
\end{theorem}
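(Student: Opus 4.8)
The plan is to produce a functor $\Phi$ from Ideals of $R$ to rings under $R$ and a functor $\Psi$ in the other direction, and to show that they are mutually inverse. Throughout we use Lemma~\ref{reduction} to present an Ideal as a central $R$-$R$-bimodule map $j : I \lra R$: an $R$-$R$-bimodule $I$ with actions $\mu_L$, $\mu_R$, a bimodule map $j$, and the relation $\mu_R(1 \sm j) = \mu_L(j \sm 1)$ of condition~(\ref{jder}); we abbreviate $\nu_L = j\mu_L$ and $\nu_R = j\mu_R$ as in Lemma~\ref{reduction}(6).

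For $\Phi$, given $j : I \lra R$ put $Q = R/I = \mathrm{Cof}(j)$ with structure map $p : R \lra Q$, and take the unit of $Q$ to be $p \circ i$, where $i : S \lra R$ is the unit of $R$. To build the multiplication $m : Q \sm Q \lra Q$, form the homotopy pushout $P$ of $I \sm R \xleftarrow{1 \sm j} I \sm I \xrightarrow{j \sm 1} R \sm I$; since the smash of two cofibers is a total cofiber, $Q \sm Q$ is the cofiber of the canonical map $P \lra R \sm R$. The actions $\mu_L : R \sm I \lra I$ and $\mu_R : I \sm R \lra I$ have equal composites with the two legs of this span --- this is exactly condition~(\ref{jder}) --- so they glue to a map $\bar m : P \lra I$. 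The square formed by $\bar m$, $\mu : R \sm R \lra R$, $j : I \lra R$ and $P \lra R \sm R$ commutes, because on the summands $I \sm R$ and $R \sm I$ of $P$ the composite $P \lra R \sm R \xrightarrow{\mu} R$ is $\nu_R$, respectively $\nu_L$, and these equal $j\mu_R$ and $j\mu_L$ by Lemma~\ref{reduction}(6). Passing to cofibers of the vertical maps of this square yields $m : Q \sm Q \lra Q$ with $m(p \sm p) = p\mu$.

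It then remains to check that $(Q, m, p i)$ is a ring with $p$ a ring map, and to run the reverse construction. Unitality of $m$ follows by smashing the defining square with $S$ on each side and comparing with the unital diagrams~(\ref{unital}). Associativity --- the equality of two maps $Q^{\sm 3} \lra Q$ --- is handled one dimension higher: $Q^{\sm 3}$ is the cofiber of a map $P_3 \lra R^{\sm 3}$, where $P_3$ glues the three loci with one tensor factor in $I$; the triple product $\mu(\mu \sm 1) = \mu(1 \sm \mu)$ carries $P_3$ into $I$ by a map $\bar m_3$ assembled from $\mu_L$, $\mu_R$, the associativity laws, and condition~(\ref{jder}), and both $m(m \sm 1)$ and $m(1 \sm m)$ are the map $Q^{\sm 3}\lra Q$ induced on cofibers by the square $(\bar m_3,\ \mu(\mu \sm 1),\ j,\ P_3 \lra R^{\sm 3})$. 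For $\Psi$, given a ring map $f : R \lra Q$ put $I = \mathrm{Fib}(f)$ with $j : I \lra R$; since $Q$ is an $R$-$R$-bimodule through $f$ and $f$ is a bimodule map, $I$ is an $R$-$R$-bimodule and $j$ a bimodule map. The centrality relation holds because both $\mu_R(1 \sm j)$ and $\mu_L(j \sm 1)$ become $\mu \circ (j \sm j) : I \sm I \lra R$ after composing with $j$, and $\mu(j \sm j)$ lifts uniquely through $j$: it is killed by $f$, since $f\mu(j \sm j) = m(fj \sm fj) = 0$, and $j$ is the fiber of $f$. So $j : I \lra R$ is an Ideal by Lemma~\ref{reduction}.

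Finally, $\Phi$ and $\Psi$ are mutually inverse. For an Ideal $j : I \lra R$, the distinguished triangle $I \xrightarrow{j} R \xrightarrow{p} Q \xrightarrow{\bdy} \Sigma I$ identifies $\mathrm{Fib}(p)$ with $I$ together with $j$, and one checks the recovered bimodule structure is the original one; for a ring map $f : R \lra Q$, the triangle $\mathrm{Fib}(f) \lra R \xrightarrow{f} Q$ identifies $\mathrm{Cof}(\mathrm{Fib}(f) \lra R)$ with $Q$ and its ring structure. Both assignments are functorial --- a morphism of Ideals is a bimodule map $I \lra I'$ over $R$, which $\Phi$ sends to the induced map of cofibers, and $\Psi$ sends a morphism of rings under $R$ to the induced map of fibers --- and the above identifications are natural, so $\Phi$ and $\Psi$ constitute an equivalence. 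I expect the main obstacle to be not the algebra, which is essentially the content of Lemma~\ref{reduction}, but the homotopical bookkeeping: in a bare triangulated category cofibers are not functorial enough to make $m$, $\bar m_3$ and the naturality isomorphisms canonical and coherent, so the cleanest route is to carry out these constructions in a point-set model for $\cS$ provided by the standing hypotheses --- a suitable monoidal model category of $R$-modules --- in which $R/I$ and $\mathrm{Fib}(f)$ are strict (co)fibers carrying, respectively, an associative unital multiplication and a bimodule structure read off directly from the data, and then to pass back to $\cS$; compare Hovey~\cite{Hovey}.
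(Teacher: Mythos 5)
The paper does not actually prove this theorem: immediately before stating it, the authors write that Jeff Smith outlined an argument in his talk which they do not reproduce, and immediately after it they merely list the assertions the theorem would entail. There is therefore no paper proof to compare your sketch against. More to the point, the authors themselves cast serious doubt on the statement in Section 4, item 6: the map $j = 2 : S \lra S$ satisfies every condition of Lemma~\ref{reduction} --- $S$ is a unital $S$-$S$-bimodule, multiplication by $2$ is a bimodule self-map, and the centrality relation $\mu_R(1 \sm j) = \mu_L(j \sm 1)$ holds trivially since both sides are multiplication by $2$ on $S \sm S \simeq S$ --- so it is an Ideal, and yet its cofiber $S/2$ admits no unital multiplication at all (cf.~Oka~\cite{MR614695}). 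Hence the functor $\Phi$ you describe cannot exist, and the theorem, as literally formulated for Ideals in the triangulated homotopy category, is false.

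Your sketch runs aground at exactly the place you flag as ``homotopical bookkeeping,'' and the issue is fatal rather than cosmetic. In a triangulated category the fill-in $m : Q \sm Q \lra Q$ on cofibers exists but is far from unique: two choices differ by a map factoring through the connecting morphism $Q \sm Q \lra \Sigma P$, and unitality of $m$ does \emph{not} follow from the commuting square --- there is a genuine obstruction, which is nonzero for $S/2$. Your proposed fix, retreating to a point-set model, changes the statement rather than proving it: Definition~\ref{def_ideal} and Lemma~\ref{reduction} live in the homotopy category, and an Ideal in that sense need not rigidify to a strict Smith ideal (a monoid for the pushout-product monoidal structure on the arrow category), which is the input Hovey's machinery actually requires. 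Hovey~\cite{Hovey} proves a Quillen equivalence between model categories of strict Smith ideals and of strict ring maps; descending from his statement to the paper's would require exactly the lifting that fails for $j = 2 : S \lra S$. Your $\Psi$ direction is sound, and the observation that centrality of $\mathrm{Fib}(f) \lra R$ follows by lifting $\mu(j \sm j)$ uniquely through the fiber is a nice one, but $\Phi$ cannot be a functor to rings under $R$ as defined, and the theorem must be reformulated (as in Hovey) before a proof can exist.
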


Implicit in the statement of the theorem are several assertions:  that $R/I$ is
a ring and the natural map $R \lra R/I$ a ring map, when $I$ is an Ideal, and
conversely, that the fiber of a ring map is an Ideal.

\section{What else}

The cofiber $R\lra R/I$ should be a ring map in a unique way.

The fiber of a ring hom is exactly an ideal.

If $\mu : R \sm R \lra R$ is an Ideal then it is an equivalence.  Since $\mu$ is a bimodule
homomorphism, this shows that it is insufficient to assume that $I \lra R$ is a bimodule
homomorphism (since $\mu$ need not be an equivalence).  In terms of lemma 2.(4), the map
$\mu$ fails:  $j(a,b) * (c,d) = (abc,d) \neq (a,b) * j(c,d) = (a,bcd).$

Note that for an inclusion $j : I \lra R$ of a sub-R-$R$-bimodule into a discrete ring $R$,
the condition 2.(4) is automatic since $j(x)*y = xy = x*j(y)$.

Standard constructions:\\

1. sum \\

2. product \\

3. prime \\

4. ideal gen by a map $ f : X \lra R$\\
We might want to take the limit of all Ideals $I \lra R$ through which $f$ factors,
but a more direct description as a colimit derived from $f$ would be better.  Small
generators $f$ for common ideals would be good.    EG, 
$I1 + I2 = I(j1+j2 : I1 \wedge Is \lra R$.

Guess: the ideal gen by $\mu : R \sm R \lra R$, $I(\mu) = (R \llra{1_R} R)$.  This
fits with $R/I(\mu) = *$.\\

inverse lim of filtered inverse system is an Ideal if pullbacks are (???)\\
$\invlim I_\alpha \llra{j} R$ is $\invlim I_\alpha \lra I_\alpha \llra{j_\alpha} R$
and this is a bimodule.  Check $j(x)*y = x*j(y)$.\\

5. Connection with EKMM obstruction theory. \\

6. (BAD) $j=2 : S \lra S$ appears to be an Ideal, but $Cof(2)$ is not a
ring spectrum, so something is wrong with the main theorem, identifying
$Cof(j) = End(D)$ when $j : I \lra R$ is an Ideal.\\

7. $j(x)*y - x*j(y) = 0$ looks like we have a derivation which annihilates decomposables.\\
In DGA setting, the cofiber of a discrete bimodule inclusion $j : I \lra R$ is a
square-zero extension, so the $j(xy) = 0$ condition follows from $xy = 0$.  What
about homs of DGAs $R \lra Q$?  Quotient $R/I$ (= colim) is wrong; want hocolim.\\

\begin{bibdiv}
\begin{biblist}

\bib{MR614695}{article}{
   author={Oka, Shichir\^{o}},
   title={Ring spectra with few cells},
   journal={Japan. J. Math. (N.S.)},
   volume={5},
   date={1979},
   number={1},
   pages={81--100},
   issn={0289-2316},
}

\bib{Smith}{misc}{
   author={Smith, Jeff},
   title={K-theory of Pushouts},
   subtitle={talk delivered 6 June 2006},
   publisher={Mittag-Leffler Institute},
   date={2006},
}

\bib{Hovey}{article}{
   author={Hovey, Mark},
   title={Smith ideals of structured ring spectra},
   note={arXiv:1401.2850 [math.AT]},
   url={https://doi.org/10.48550/arXiv.1401.2850},
   date={2014},
}

\end{biblist}
\end{bibdiv}

\end{document}